\documentclass[10pt]{amsart}
\usepackage{rotating,latexsym,amsfonts,amsmath,enumerate,enumerate,
amsthm}
\newtheorem{theorem}{Theorem}[section]
\newtheorem{lemma}[theorem]{Lemma}

\newtheorem{corollary}[theorem]{Corollary}

\theoremstyle{definition}

\newtheorem{algorithm}[theorem]{Algorithm}

\theoremstyle{remark}

\numberwithin{equation}{section}

\begin{document}
\title[On the complexity of detecting positive eigenvectors]{On the complexity of detecting positive eigenvectors of nonlinear cone maps}

\author{Bas Lemmens}
\address{School of Mathematics, Statistics \& Actuarial Science, Sibson Building, 
University of Kent, Canterbury, Kent CT2 7FS, UK}
\curraddr{}
\email{B.Lemmens@kent.ac.uk}
\thanks{}

\author{Lewis White}
\address{School of Mathematics, Statistics \& Actuarial Science, Sibson Building, 
University of Kent, Canterbury, Kent CT2 7FS, UK}
\curraddr{}
\email{lcw32@kent.ac.uk}
\thanks{The second author was supported by a London Mathematical Society ``Undergraduate Research Bursary'' and the 
School of Mathematics, Statistics and Actuarial Science at the University of Kent.}

\subjclass[2010]{Primary 47H07, 47H09;; Secondary 37C25}
%    For articles to be published after 1 January 2010, you may use
%    the following version:
%\subjclass[2010]{Primary }

\keywords{Nonlinear maps on cones, positive eigenvectors, illumination problem, Hilbert's metric} 

\dedicatory{}

\begin{abstract}
In recent work with Lins and Nussbaum the first author gave an algorithm that can detect the existence of a positive eigenvector for order-preserving homogeneous maps on  the standard positive cone.   The main goal of this paper is to determine the minimum  number of iterations this algorithm requires. It is known that  this number is equal to the illumination number of the unit ball, $B_{\mathrm{v}}$, of the variation norm, $\|x\|_{\mathrm{v}} :=\max_i x_i -\min_i x_i$ on $V_0:=\{x\in\mathbb{R}^n\colon x_n=0\}$. In this paper we   show that the illumination number of $B_{\mathrm{v}}$ is equal to ${n\choose\lceil \frac{n}{2}\rceil}$, and hence provide a sharp lower bound for the running time of the algorithm. 
\end{abstract}

\maketitle
\section{Introduction}
The classical Perron-Frobenius theory concerns the spectral properties of square nonnegative matrices. In recent decades this theory has been extended to a variety of nonlinear maps  that preserve a partial ordering induced by a cone (see \cite{LNBook} and the references therein for an up-to-date account). 

Of particular interest  are  order-preserving homogeneous maps $f\colon \mathbb{R}^n_{\geq 0}\to  \mathbb{R}^n_{\geq 0}$, where 
\[ \mathbb{R}^n_{\geq 0}:=\{x\in\mathbb{R}^n\colon x_i\geq 0\mbox{ for all } i=1,\ldots,n\}\] is the {\em standard positive cone}. Recall that $f\colon  \mathbb{R}^n_{\geq 0}\to  \mathbb{R}^n_{\geq 0}$ is {\em order-preserving} if $f(x)\leq f(y)$ whenever  $x\leq y$ and  $x,y\in  \mathbb{R}^n_{\geq 0}$. Here $w\leq z$ if $z-w\in  \mathbb{R}^n_{\geq 0}$. Furthermore, $f$ is said to be {\em homogeneous} if $f(\lambda x) =\lambda f(x)$ for all  $\lambda \geq 0$ and $x\in  \mathbb{R}^n_{\geq 0}$. Such maps arise in mathematical biology \cite{NMem2,Sch} and in optimal control and game theory \cite{BK,RS}. 

It is known \cite[Corollary 5.4.2]{LNBook} that if $f\colon  \mathbb{R}^n_{\geq 0}\to  \mathbb{R}^n_{\geq 0}$ is a continuous, order-preserving, homogeneous map, then there exists $v\in  \mathbb{R}^n_{\geq 0}$ such that 
\[
f(v) = r(f) v,
\]
where 
\[
r(f) :=\lim_{k\to\infty} \|f^k\|^{1/k}_{ \mathbb{R}^n_{\geq 0}} 
\]
is the {\em cone spectral radius} of $f$ and \[
\|g\|_{\mathbb{R}^n_{\geq 0}}:=\sup \{\|g(x)\|\colon x\in \mathbb{R}^n_{\geq 0}\mbox{ and }  \|x\|\leq 1\}.\]
Thus, as in the case of nonnegative matrices, continuous order-preserving  homogeneous maps on $\mathbb{R}^n_{\geq 0}$ have an eigenvector in the cone corresponding to the spectral radius. 

In many applications it is important to know if the map has a {\em positive} eigenvector, i.e., an eigenvector that lies in the 
interior, $\mathbb{R}^n_{>0} :=\{x\in\mathbb{R}^n_{\geq 0}\colon x_i > 0\mbox{ for }i=1,\ldots,n\}$, of $\mathbb{R}^n_{\geq 0}$. This appears  to be a much more subtle problem.  There exists a variety of sufficient conditions in the literature, see \cite{Ca}, \cite{GG}, \cite[Chapter 6]{LNBook}, and \cite{NMem1}. Recently,  Lemmens, Lins and Nussbaum  \cite[Section 5]{LLN}  gave an algorithm that can confirm the existence of a positive eigenvector for continuous, order-preserving, homogeneous maps $f\colon \mathbb{R}^n_{\geq 0}\to \mathbb{R}^n_{\geq 0}$. The main goal of this paper is to determine the minimum number of iterations this algorithm needs to perform.   

\section{Preliminaries} 
 Given a set $S$ in a finite dimensional vector space $V$ we write $S^\circ$ to denote the interior of $S$, and we write $\partial S$ to denote the boundary of $S$ with respect to the norm topology on $V$.  

It is known that if $f\colon \mathbb{R}^n_{\geq 0}\to \mathbb{R}^n_{\geq 0}$ is an order-preserving homogeneous map and there exists $z\in \mathbb{R}^n_{> 0}$ such that $f(z) \in\partial \mathbb{R}^n_{\geq 0}$, then $f(\mathbb{R}^n_{> 0})\subset \partial \mathbb{R}^n_{\geq 0}$, see \cite[Lemma 1.2.2]{LNBook}. Thus to analyse the existence of a positive eigenvector one may as well consider order-preserving homogeneous maps $f\colon \mathbb{R}^n_{>0}\to \mathbb{R}^n_{> 0}$. Moreover, on $\mathbb{R}^n_{>0}$ we have {\em Hilbert's metric}, $d_H$, which is given by 
\[
d_H(x,y) := \log \left(\max_i \frac{x_i}{y_i}\right) - \log \left(\min_i \frac{x_i}{y_i}\right)\mbox{\quad for }x,y\in \mathbb{R}^n_{>0}.
\]
Note that $d_H$ is not a genuine metric, as $d_H(\lambda x, \mu x) = 0$ for all $x\in  \mathbb{R}^n_{>0}$ and $\lambda,\mu >0$. In fact,  $d_H(x,y) =0$ if and only if $x=\lambda y$ for some $\lambda >0$. However, $d_H$ is a metric on the set of rays in $\mathbb{R}^n_{>0}$. 

If $f\colon \mathbb{R}^n_{>0}\to \mathbb{R}^n_{>0}$ is order-preserving and homogeneous, then $f$ is nonexpansive under $d_H$, i.e., 
\[
d_H(f(x),f(y))\leq d_H(x,y)\mbox{\quad for all }x,y\in \mathbb{R}^n_{>0},  
\]
see for example \cite[Proposition 2.1.1]{LNBook}. 
In particular, order-preserving homogeneous maps $f\colon \mathbb{R}^n_{>0}\to \mathbb{R}^n_{>0}$ are continuous on $\mathbb{R}^n_{>0}$.  
Moreover, if $x$ and $y$ are eigenvectors of $f\colon \mathbb{R}^n_{>0}\to \mathbb{R}^n_{>0}$ with $f(x) = \lambda x$ and $f(y)=\mu y$, then $\lambda =\mu$, see \cite[Corollary 5.2.2]{LNBook}. 

In \cite[Theorem 5.1]{LLN} the following necessary and sufficient conditions were obtained for an order-preserving homogeneous map $f\colon  \mathbb{R}^n_{>0}\to \mathbb{R}^n_{>0}$ to have a nonempty set of eigenvectors, $\mathrm{E}(f) :=\{x\in  \mathbb{R}^n_{> 0}\colon x\mbox{ eigenvector of } f\}$, which is bounded under Hilbert's metric. 
\begin{theorem}\label{thm:npfthm} If $f\colon \mathbb{R}^n_{> 0}\to \mathbb{R}^n_{> 0}$ is an order-preserving homogeneous map, then $\mathrm{E}(f)$ is nonempty and bounded under $d_H$ if and only if for each nonempty proper subset $J$ of $\{1,\ldots,n\}$ there exists $x^J\in \mathbb{R}^n_{> 0}$ such that 
\begin{equation}\label{eq:1.1}
\max_{j\in J}\,\frac{f(x^J)_j}{x^J_j}< \min_{j\in J^c}\,\frac{f(x^J)_j}{x^J_j}.
\end{equation}
\end{theorem}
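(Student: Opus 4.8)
The plan is to move to a normalized slice of the cone, recast both the eigenvector problem and condition~\eqref{eq:1.1} in the language of Hilbert's metric, and then reduce the equivalence to a statement about how iterates and eigenvectors of $f$ may approach the boundary of that slice. I would fix $\Sigma:=\{x\in\mathbb{R}^n_{>0}\colon\sum_i x_i=1\}$ and the normalized map $g\colon\Sigma\to\Sigma$, $g(x):=f(x)/\sum_i f(x)_i$. Since $d_H(\lambda x,\mu y)=d_H(x,y)$ and $f$ is $d_H$-nonexpansive, $g$ is $d_H$-nonexpansive; and $(\Sigma,d_H)$ is a proper metric space, so its $d_H$-bounded subsets have compact closure inside $\Sigma$. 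The rays of $\mathrm{E}(f)$ correspond bijectively and $d_H$-isometrically with $\mathrm{Fix}(g)$, and, as all eigenvectors of $f$ share one eigenvalue by \cite[Corollary~5.2.2]{LNBook}, we may rescale $f$ so that $f(v)=v$ for all $v\in\mathrm{E}(f)$. Writing $u_i(x):=\log f(x)_i-\log x_i$, we have $d_H(f(x),x)=\max_i u_i(x)-\min_i u_i(x)$, so $x$ is an eigenvector exactly when $u(x)$ is constant, while for a proper nonempty $J\subseteq\{1,\dots,n\}$ inequality~\eqref{eq:1.1} at $x$ reads $\max_{j\in J}u_j(x)<\min_{k\in J^c}u_k(x)$. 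Thus the theorem becomes: $\mathrm{Fix}(g)$ is nonempty and $d_H$-bounded if and only if for every proper nonempty $J$ some point of $\Sigma$ strictly separates the coordinates of $u$ along $J$.

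For sufficiency, assume that for every proper nonempty $J$ there is a point $x^J$ with $\max_{j\in J}u_j(x^J)<\min_{k\in J^c}u_k(x^J)$. The crucial claim is that then no $g$-orbit can approach $\partial\Sigma$. To see this I would argue by contradiction: if a subsequence $y_m$ of some orbit tended to $\partial\Sigma$, pass to a further subsequence along which the coordinates of $y_m$ split into blocks of distinct asymptotic orders of magnitude, let $J$ be the union of all blocks but the smallest, so that the $J^c$-coordinates of $y_m$ vanish while the $J$-coordinates stay of order one, and then use homogeneity of $f$ and order-preservation to compare $f(y_m)$ with the values of $f$ at vectors comparable to a fixed scaling of $x^J$; combined with the fact that $d_H(f(y_m),y_m)=\max_i u_i(y_m)-\min_i u_i(y_m)$ stays bounded along an orbit (nonexpansiveness gives $d_H(g^{k+1}x,g^kx)\le d_H(gx,x)$), this should contradict the strict separation at $x^J$. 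Granting the claim, a compactness argument yields a single compact set $K\subseteq\Sigma$ containing all forward orbits, so $g$ has a $d_H$-bounded orbit; a fixed-point theorem of nonlinear Perron--Frobenius type for order-preserving homogeneous maps whose normalized iterates have a bounded Hilbert-metric orbit (see \cite{LNBook}) then gives $\mathrm{Fix}(g)\neq\emptyset$, and since each fixed point is its own orbit, $\mathrm{Fix}(g)\subseteq K$ is $d_H$-bounded.

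For necessity, suppose~\eqref{eq:1.1} fails for some proper nonempty $J$, i.e.\ $\max_{j\in J}u_j(x)\ge\min_{k\in J^c}u_k(x)$ for all $x\in\mathbb{R}^n_{>0}$; assuming $\mathrm{E}(f)\neq\emptyset$ (otherwise there is nothing to prove), fix $v$ with $f(v)=v$. I would then plug into~\eqref{eq:1.1} the vectors $x(t)$ with $x(t)_i=tv_i$ for $i\in J$ and $x(t)_i=v_i$ for $i\in J^c$, $t>1$: monotonicity gives $v\le f(x(t))\le tv$, hence $u_j(x(t))\le 0\le u_k(x(t))$ for $j\in J$, $k\in J^c$, so the failed inequality forces $\max_{j\in J}u_j(x(t))=\min_{k\in J^c}u_k(x(t))=0$ for every $t>1$. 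Choosing $t_m\to\infty$ along which the (finitely many) indices achieving these extrema are constant, say $j^*\in J$ and $k^*\in J^c$, homogeneity gives $f(x(t_m)/t_m)_{j^*}=v_{j^*}$ and $f(x(t_m)/t_m)_{k^*}\to 0$, while $x(t_m)/t_m$ decreases to the boundary point $w$ supported on $J$ with $w_i=v_i$ for $i\in J$. Passing to a monotone limit exhibits a genuine degeneration of $f$ toward the face $\{x\colon x_i=0,\ i\notin J\}$, which should force $\mathrm{E}(f)$ to be $d_H$-unbounded (its elements accumulate on that face); note that, since $g$ is $d_H$-nonexpansive and has a fixed point, all orbits are automatically bounded, so the unboundedness of $\mathrm{Fix}(g)$ is indeed the only way ``nonempty and bounded'' can fail here.

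The step I expect to be the main obstacle is the boundary analysis underlying both directions: the rigorous link between ``iterates or eigenvectors of $f$ approach the face $\{x\colon x_i=0,\ i\notin J\}$'' and ``no point of $\Sigma$ strictly separates $u$ along $J$.'' The difficulty is that $f$ lives only on the open cone and need not extend continuously to $\partial\mathbb{R}^n_{\ge 0}$, so the squeezing estimates and the passage to monotone limits must be handled carefully, and on the sufficiency side one also needs the uniform upgrade from ``no single orbit escapes'' to ``one compact subset of $\Sigma$ contains every orbit.'' A conceptually cleaner route---and probably the one to pursue---is through the horofunction boundary of the Hilbert metric on $\Sigma$, whose Busemann points are indexed by the proper faces $\{x\colon x_i=0,\ i\notin J\}$, together with a Karlsson--Nussbaum-type theorem that a fixed-point-free $d_H$-nonexpansive map fixes such a Busemann point: condition~\eqref{eq:1.1} at $J$ is then precisely what stops $g$ from fixing, or being asymptotic to, the Busemann point of the corresponding face, and the same ideas pin down the boundedness of $\mathrm{Fix}(g)$.
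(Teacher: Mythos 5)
Your reduction to the normalized map $g$ and the reformulation of \eqref{eq:1.1} as $\max_{j\in J}u_j(x)<\min_{k\in J^c}u_k(x)$ with $u_i(x)=\log(f(x)_i/x_i)$ is correct, and it matches the first step taken in \cite{LLN}; but both decisive steps of your dynamical argument have genuine gaps. In the sufficiency direction the squeezing you propose does not produce a contradiction. If $y_m$ tends to the face with support $J$ and you sandwich $y_m$ between multiples of the scaled vectors $z(t)$ (with $z(t)_j=x^J_j$ for $j\in J$ and $z(t)_k=t\,x^J_k$ for $k\in J^c$), then order-preservation and homogeneity only yield estimates of the form $\max_{j\in J}u_j(y_m)\le \log(C\alpha)$ and $u_{k_m}(y_m)\ge\log(c\beta)$ for a single index $k_m\in J^c$, where $\alpha<\beta$ are the two sides of \eqref{eq:1.1} at $x^J$ and $C\ge 1\ge c$ are comparison constants. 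These bounds are perfectly compatible with the bounded oscillation $\max_i u_i(y_m)-\min_i u_i(y_m)\le d_H(f(x_0),x_0)$ along the orbit: the separation at $x^J$ persists along the path $z(t)$ but does not grow, so nothing blows up and no contradiction appears. The ``blocks of distinct orders of magnitude'' device does not repair this, and the fallback you suggest (a Karlsson--Nussbaum/Denjoy--Wolff theorem) would at best address nonemptiness of the fixed point set, not its boundedness, while still leaving open exactly the same link between \eqref{eq:1.1} and the relevant boundary point.

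In the necessity direction your computation that $\max_{j\in J}u_j(x(t))=\min_{k\in J^c}u_k(x(t))=0$ for all $t>1$ is correct and is a nice observation, but the final step --- ``a degeneration of $f$ toward the face $\{x\colon x_i=0,\ i\notin J\}$ forces $\mathrm{E}(f)$ to be $d_H$-unbounded'' --- is asserted rather than proved: degeneration of the map near a face does not by itself produce eigenvectors accumulating on that face, and producing such eigenvectors is exactly what unboundedness of $\mathrm{E}(f)$ requires. For comparison, the paper does not prove Theorem \ref{thm:npfthm} directly at all: it quotes it from \cite[Theorem 5.1]{LLN}, where the equivalence is obtained by conjugating $g_f$ by $\mathrm{Log}$ to a $\|\cdot\|_{\mathrm{v}}$-nonexpansive map $h$ on $V_0$, checking that $\mathrm{Log}(g_f(x))-\mathrm{Log}(x)$ illuminates the extreme point $v^I_{\pm}$ of $B_{\mathrm{v}}$ precisely when \eqref{eq:1.1} holds for the corresponding $J$, and then invoking the general illumination criterion of Theorem \ref{thm:fp}; all of the boundary analysis you are struggling with is packaged into that theorem. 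As written, your proposal does not close either direction.
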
 
Note that the assertion is trivial in case $n=1$, as each order-preserving homogeneous map $f\colon \mathbb{R}_{> 0}\to \mathbb{R}_{> 0}$ has a nonempty bounded set of eigenvectors.
In case $n\geq 2$ Theorem \ref{thm:npfthm}  yields the following  simple  algorithm for detecting positive eigenvectors:  
\begin{algorithm} \label{alg} Let $f\colon \mathbb{R}^n_{>0} \to \mathbb{R}^n_{>0}$ be an order-preserving homogeneous  map.  
Repeat the following steps until every nonempty proper subset $J$  of $\{1,\ldots, n\}$ has been recorded. 
\begin{description}
\item[Step 1] Randomly select $x$, with $x_1=1$ and $0<x_j<1$ for all $j\in\{2,\ldots,n\}$, and compute $f(x)_j/x_j$ for all $j \in \{1,\ldots, n\}$.  
\item[Step 2] Record all nonempty proper subsets $J \subset \{1,\ldots,n\}$ such that inequality  (\ref{eq:1.1}) holds.   
\end{description}
\end{algorithm}
So,  if this  algorithm halts, then $f$ has an eigenvector in $\mathbb{R}^n_{>0}$ and $\mathrm{E(}f)$ is bounded under Hilbert's metric. If $\mathrm{E}(f)$ is empty or unbounded under $d_H$, then the algorithm does not halt. This can happen even if the map is linear. Consider, for example the linear map $x\mapsto Ax$ on $\mathbb{R}^2_{>0}$, where 
\[
A = \left [\begin{array}{cc} 1& 1 \\ 0 & 1 \end{array}\right], 
\]
which has no eigenvector in $\mathbb{R}^2_{>0}$.  At present no algorithm is known that can decide if an order-preserving homogeneous map on  $\mathbb{R}^n_{>0}$ has an empty or an unbounded set of eigenvectors. It is also unknown if there is an efficient way to generate the vectors $x$ in Step 1. 

Note that a randomly chosen $x$ in Step 1 can eliminate multiple subsets $J$ in Step 2. So, it is natural to ask for the least number of vectors required to fulfill  the  $2^n-2$ inequalities in (\ref{eq:1.1}). This number corresponds to the minimum number of times the algorithm has to perform Steps 1 and 2. In this  paper we  show that one needs at least 
\[
n\choose \lceil n/2\rceil
\]
vectors and  this lower bound is sharp. Here $\lceil a\rceil$ is the smallest integer $n\geq a$. Likewise we  write $\lfloor a\rfloor $ to denote the largest integer $n\leq a$. 

\section{Connection with the illumination number}
Recall that given a compact convex set $C$ with nonempty interior in $V$, a vector $v\in V$ {\em illuminates} $z\in\partial  C$ if $z+\lambda v\in C^\circ$ for all $\lambda>0$ sufficiently small. A set $S$ is said to {\em illuminate} $C$ if for each $z\in\partial C$ there exists $v\in S$ such that $v$ illuminates $z$. The minimal size of illuminating set for $C$ is called the {\em illumination number} of $C$ and is denoted $i(C)$. There is a long-standing open conjecture which asserts that $i(C)\leq 2^n$ for every compact convex body in an $n$-dimensional vector space, see \cite[Chapter VI]{BMS} for further details. It is easy to show, see for example \cite[Lemma 4.1]{LLN}, that if $S$ illuminates every extreme point of $C$, then $S$ illuminates $C$. 

To proceed we need to discuss the connection between illumination numbers and  Theorem \ref{thm:npfthm}. 
Firstly, we note that if we let $\Sigma_0:=\{x\in \mathbb{R}^n_{>0}\colon x_n=1\}$, then $(\Sigma_0,d_H)$ is a metric space. Given an order-preserving  homogeneous map $f\colon \mathbb{R}^n_{>0} \to \mathbb{R}^n_{>0}$ we can consider the {\em normalised} map $g_f\colon \Sigma_0\to \Sigma_0$ given by 
\[
g_f(x) := \frac{f(x)}{f(x)_n}\mbox{ \quad for }x\in \Sigma_0.
\]
The map $g_f$ is nonexpansive  under $d_H$ on $\Sigma_0$. Moreover, $x\in \Sigma_0$ is a fixed point of $g_f$ if and only if $x$ is an eigenvector of $f$. Thus, if we let $\mathrm{Fix}(g_f) :=\{x\in\Sigma_0\colon g_f(x) =x\}$, then $\mathrm{Fix}(g_f)$ is nonempty and bounded in $(\Sigma_0,d_H)$ if and only if $\mathrm{E}(f)$ is nonempty and bounded in $(\mathbb{R}^n_{>0},d_H)$.  

It not hard to verify that the map $\mathrm{Log}\colon \Sigma_0\to V_0$ given by 
\[
\mathrm{Log}(x) := (\log x_1,\ldots,\log x_n)\mbox{\quad for }x=(x_1,\ldots,x_n)\in\Sigma_0
\]
is an isometry from $(\Sigma_0,d_H)$ onto $(V_0,\|\cdot\|_{\mathrm{v}})$, where $V_0 :=\{x\in\mathbb{R}^n\colon x_n =0\}$ and 
\[
\|x\|_{\mathrm{v}} := \max_i x_i -\min_i x_i
\]
is the {\em variation norm}. 

It follows that the map $h\colon V_0\to V_0$ satisfying $h\circ \mathrm{Log} = \mathrm{Log}\circ g_f$ is nonexpansive under the variation norm, and $\mathrm{Fix}(h)$ is nonempty and bounded in $(V_0,\|\cdot\|_{\mathrm{v}})$ if and only if $\mathrm{Fix}(g_f)$ is nonempty and bounded in $(\Sigma_0,d_H)$. 

In \cite[Theorem 3.4]{LLN}  the following result concerning fixed point sets of nonexpansive maps on finite dimensional normed spaces was proved.
\begin{theorem}\label{thm:fp} If $h\colon V\to V$ is a  nonexpansive map  on a finite dimensional normed space $V$, then $\mathrm{Fix}(h)$  is  nonempty and  bounded  if and only if there exist $w^1,\ldots,w^m\in V$ such that $\{f(w^i)-w^i\colon i=1,\ldots,m\}$ illuminates the unit ball of $V$. 
\end{theorem}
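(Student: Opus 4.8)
The plan is to prove the two implications separately. Throughout I will use the elementary description of illumination: writing $B$ for the unit ball of $V$, a vector $v$ illuminates a point $z\in\partial B$ if and only if $\varphi(v)<0$ for every support functional $\varphi$ of $B$ at $z$, i.e.\ every $\varphi\in V^{*}$ with $\|\varphi\|=1$ and $\varphi(z)=1$. (The right derivative of $\lambda\mapsto\|z+\lambda v\|$ at $0$ equals the maximum of $\varphi(v)$ over this compact set of functionals, so $v$ illuminates $z$ exactly when that maximum is negative.)

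\emph{Forward implication.} Suppose $\mathrm{Fix}(h)$ is non-empty and bounded. Replacing $h$ by $x\mapsto h(x+p)-p$ for some $p\in\mathrm{Fix}(h)$, I may assume $h(0)=0$, so that $\|h(x)\|\le\|x\|$ for all $x$ and $h$ maps each ball $rB$ into itself. The bridge to illumination is this: for $w\neq 0$ and any support functional $\varphi$ of $B$ at $w/\|w\|$,
\[
\varphi(h(w)-w)=\varphi(h(w))-\|w\|\le\|h(w)\|-\|w\|\le 0,
\]
with \emph{strict} inequality as soon as $\|h(w)\|<\|w\|$; hence $h(w)-w$ illuminates $w/\|w\|$ whenever the norm strictly drops at $w$. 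I would also note that there is \emph{no} $\psi\in V^{*}\setminus\{0\}$ with $\psi(h(x))\ge\psi(x)$ for all $x$: otherwise $h$ would map each compact convex set $rB\cap\{x\colon\psi(x)\ge c\}$ into itself, and Brouwer's theorem would produce fixed points of arbitrarily large $\psi$-value, contradicting boundedness of $\mathrm{Fix}(h)$; equivalently, $0$ lies in the interior of $\mathrm{conv}\{h(x)-x\colon x\in V\}$. By \cite[Lemma 4.1]{LLN} and a standard compactness argument (illumination of a point by a fixed vector is an open condition on the point and $\partial B$ is compact), it now suffices to show that the full displacement set $\{h(x)-x\colon x\in V\}$ illuminates every extreme point $z$ of $B$; a finite subfamily then already illuminates $B$.

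Two kinds of extreme points are easy. If $z$ is a smooth point of $B$, its unique support functional $\varphi_{z}$ cannot be $\ge 0$ on all displacements (that would force $\varphi_{z}=0$, since $0$ is interior to their convex hull), so $\varphi_{z}(h(x)-x)<0$ for some $x$ and $h(x)-x$ illuminates $z$. If $\|h(tz)\|<t$ for some $t>0$, then $h(tz)-tz$ illuminates $z$ by the bridge. The remaining, genuinely hard, case is an extreme point $z$ along which $h$ behaves isometrically: $\|h(tz)\|=t$ for all $t>0$ and $h(tz)/t$ stays in a face of $B$ meeting $z$, so that \emph{no} displacement on the ray $\mathbb{R}_{\ge 0}z$ illuminates $z$ strictly. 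Here I would pass to the quotient of $V$ by the linear span of the minimal face of $B$ containing $z$ (after which $z$ becomes a vertex) and choose a base point $w$ \emph{off} the ray $\mathbb{R}_{\ge 0}z$, the choice being dictated by the facial structure of $B$ near $z$, using boundedness of $\mathrm{Fix}(h)$ to exclude the degenerate configurations. \textbf{I expect this to be the main obstacle: producing, at the ``isometric'' boundary directions of $h$, a single displacement $h(w)-w$ that illuminates $z$.}

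\emph{Converse.} I would argue by contraposition: if $\mathrm{Fix}(h)$ is empty or unbounded, then no finite family of displacements illuminates $B$. In either case I claim there is a sequence $q_{k}\in V$ with $\|q_{k}\|\to\infty$ and $\|h(q_{k})-q_{k}\|$ bounded: if $\mathrm{Fix}(h)$ is unbounded take $q_{k}\in\mathrm{Fix}(h)$ (then $h(q_{k})=q_{k}$); if $\mathrm{Fix}(h)=\emptyset$ take $q_{k}$ to be the unique fixed point of the contraction $x\mapsto s_{k}h(x)+(1-s_{k})a$ (for any fixed $a\in V$) with $s_{k}\uparrow 1$, which forces $\|q_{k}\|\to\infty$ and $\|h(q_{k})-q_{k}\|=\tfrac{1-s_{k}}{s_{k}}\|q_{k}-a\|$ bounded. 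Passing to a subsequence with $q_{k}/\|q_{k}\|\to u$, let $\psi$ be a support functional of $B$ at $u$. Since $\|q_{k}\|\to\infty$, both $(q_{k}-x)/\|q_{k}-x\|$ and $(q_{k}-h(x))/\|q_{k}-h(x)\|$ tend to $u$, so $\psi(q_{k}-x)=\|q_{k}-x\|+o(\|q_{k}\|)$ and $\psi(q_{k}-h(x))=\|q_{k}-h(x)\|+o(\|q_{k}\|)$; combining these with $\|q_{k}-h(x)\|\le\|q_{k}-x\|+\|h(q_{k})-q_{k}\|=\|q_{k}-x\|+o(\|q_{k}\|)$ gives $\psi(q_{k})-\psi(h(x))\le\psi(q_{k})-\psi(x)+o(\|q_{k}\|)$, and letting $k\to\infty$ yields $\psi(h(x))\ge\psi(x)$ for all $x$. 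Thus $\psi$ is a support functional of $B$ at $u\in\partial B$ with $\psi(h(w)-w)\ge 0$ for every $w$, so no displacement illuminates $u$, and hence no finite family of displacements illuminates $B$.
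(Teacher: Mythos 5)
First, a point of orientation: the paper you are working from does not prove this statement at all; it is quoted verbatim from \cite[Theorem 3.4]{LLN}, so your proposal has to stand on its own. It does not. In the forward implication, your reduction to illuminating the extreme points by the full displacement set is fine, the ``bridge'' inequality is correct, and the Brouwer argument showing that no nonzero $\psi$ satisfies $\psi\circ h\ge\psi$ is a nice and valid observation. But the case you yourself flag as ``the main obstacle'' --- a non-smooth extreme point $z$ with $\|h(tz)\|=t$ for all $t>0$ --- is precisely where the content of the theorem lies, and your sketch (``pass to the quotient\ldots\ choose $w$ off the ray, dictated by the facial structure'') contains no argument. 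For instance, for the isometry $h(x_1,x_2)=(-x_2,x_1)$ of $(\mathbb{R}^2,\|\cdot\|_\infty)$, which has $\mathrm{Fix}(h)=\{0\}$, the extreme point $z=(1,1)$ is exactly of this type: every displacement along the ray $\mathbb{R}_{\ge0}z$ points in the non-illuminating direction $(-1,0)$, and producing a suitable $w$ off the ray requires a genuinely new device. That device is the heart of the proof in \cite{LLN}, and it is missing here.

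The converse contains an actual false step, not merely a gap. Your error terms are $o(\|q_k\|)$, not $o(1)$: from $\psi(q_k-x)=\|q_k-x\|-\delta_k\|q_k-x\|$ with $\delta_k\to0$ you only obtain $\psi(x)-\psi(h(x))\le\delta_k\|q_k-x\|+O(1)$, and $\delta_k\|q_k-x\|$ need not tend to $0$ or even stay bounded, so ``letting $k\to\infty$'' yields nothing. Indeed the conclusion you claim for an \emph{arbitrary} support functional at $u$ is false: take $h(x_1,x_2)=(x_2,x_1)$ on $(\mathbb{R}^2,\|\cdot\|_\infty)$, so that $\mathrm{Fix}(h)$ is the unbounded diagonal; then $q_k=(k,k)$ gives $u=(1,1)$, and $\psi=e_1^*$ is a norm-one support functional at $u$ with $\psi(h(1,0))=0<1=\psi(1,0)$. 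What is true, and what suffices for the contradiction with illumination, is that \emph{some} support functional at $u$ (here $\tfrac12(e_1^*+e_2^*)$) is nondecreasing along $h$; but extracting it requires keeping the whole set of support functionals at $u$ in play, e.g.\ via the limit function $y\mapsto\lim_k\bigl(\|q_k-y\|-\|q_k\|\bigr)$, which is concave rather than linear when the ball is not smooth at $u$. Your overall strategy for the converse --- approximate fixed points escaping to infinity, then a functional obstruction to illumination at the limit direction --- is the right one, but as written the key inequality does not follow.
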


For $n\geq 2$, the unit ball $B_\mathrm{v}$ of $(V_0,\|\cdot\|_{\mathrm{v}})$ has $2^n-2$ extreme points, which are given by 
  \begin{equation} \label{eq:varExtr}
\mathrm{ext}(B_{\mathrm{v}}) :=\{v^I_+\colon \emptyset \neq I\subseteq \{1,\ldots,n-1\}\}\cup 
\{v^I_-\colon \emptyset \neq I\subseteq \{1,\ldots,n-1\}\}, 
\end{equation}
where $(v^I_+)_i =1$ if $i\in I$ and $0$ otherwise, and $(v^I_-)_i =-1$ if $i\in I$ and $0$ otherwise. See \cite[\S 2]{Nu2} for details.

In \cite{LLN} the equivalence in Theorem \ref{thm:npfthm} was obtained by using Theorem \ref{thm:fp} and showing that there exists $x^1,\ldots,x^m\in \mathbb{R}^n_{>0}$ that fulfill the $2^n-2$ inequalities in (\ref{eq:1.1}) if and only if there exist $y^1,\ldots,y^m\in V_0$ that illuminate the   $2^n-2$ extreme points of the unit ball $B_\mathrm{v}$.  
Thus, $i(B_{\mathrm{v}})$ provides a sharp lower bound for the number of times one needs to repeat Steps 1 and 2 in  Algorithm \ref{alg}.  In the next section we  show the following result concerning $i(B_{\mathrm{v}})$. 
\begin{theorem}\label{thm:main}
If $B_\mathrm{v}$ is the unit ball of $(V_0,\|\cdot\|_{\mathrm{v}})$ and $n\geq 2$, then 
\[
i(B_\mathrm{v}) = {n\choose\lceil n/2\rceil}.
\] 
 \end{theorem}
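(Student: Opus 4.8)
The plan is to convert the illumination problem for $B_{\mathrm{v}}$ into a problem about covering the Boolean lattice by chains, and then to invoke Sperner's theorem and Dilworth's theorem. By \cite[Lemma 4.1]{LLN} it suffices to determine the least number of vectors in $V_0$ needed to illuminate the $2^n-2$ extreme points in \eqref{eq:varExtr}. Working modulo the line $\mathbb{R}(1,\ldots,1)$, every such extreme point is the indicator vector $\mathbf{1}_S$ of a proper nonempty subset $S\subsetneq\{1,\ldots,n\}$: the point $v^I_+$ corresponds to $S=I$ (so $n\notin S$) and $v^I_-$ corresponds to $S=\{1,\ldots,n\}\setminus I$ (so $n\in S$). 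Expanding $\|\mathbf{1}_S+\lambda v\|_{\mathrm{v}}$ to first order in $\lambda>0$ shows that a vector $v\in V_0$ illuminates the extreme point attached to $S$ if and only if
\[
\max_{i\in S} v_i \;<\; \min_{i\notin S} v_i;
\]
after taking logarithms this is precisely inequality \eqref{eq:1.1}, and the equivalence is already implicit in \cite{LLN}.

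Next I would isolate the combinatorial structure. If a single $v$ illuminates two subsets $S$ and $T$, then $S$ and $T$ must be nested: were $i\in S\setminus T$ and $j\in T\setminus S$, the displayed inequality for $S$ would give $v_i<v_j$ while the one for $T$ would give $v_j<v_i$. Hence the collection of subsets illuminated by a fixed $v$ is a \emph{chain} in the Boolean lattice. Conversely, for any chain $\emptyset\neq S_1\subsetneq\cdots\subsetneq S_m\subsetneq\{1,\ldots,n\}$ (with $S_0:=\emptyset$), the vector assigning to coordinate $i$ the index $r$ for which $i\in S_r\setminus S_{r-1}$, and the value $m+1$ when $i\notin S_m$, illuminates every $S_1,\ldots,S_m$ once a multiple of $(1,\ldots,1)$ is subtracted to land in $V_0$. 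Consequently $i(B_{\mathrm{v}})$ equals the minimum number of chains needed to cover the poset $P$ of proper nonempty subsets of $\{1,\ldots,n\}$ under inclusion.

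The value of this covering number follows from classical order theory. For the lower bound, the family of all subsets of size $\lceil n/2\rceil$ is an antichain in $P$ of cardinality $\binom{n}{\lceil n/2\rceil}$, and a chain meets an antichain in at most one element, so no fewer than $\binom{n}{\lceil n/2\rceil}$ chains can cover $P$. For the upper bound, Dilworth's theorem identifies the minimum number of chains covering $P$ with its maximum antichain, which by Sperner's theorem has size $\binom{n}{\lfloor n/2\rfloor}=\binom{n}{\lceil n/2\rceil}$ (note that $\emptyset$ and $\{1,\ldots,n\}$ lie in no maximum antichain when $n\geq 2$); alternatively, a symmetric chain decomposition of the Boolean lattice, with $\emptyset$ and $\{1,\ldots,n\}$ deleted from the unique chain containing them, furnishes such a cover explicitly. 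Realizing each chain of the cover by a vector as in the previous paragraph produces an illuminating set of size $\binom{n}{\lceil n/2\rceil}$, and the theorem follows.

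The main obstacle I anticipate is the reduction carried out in the first two paragraphs: pinning down exactly which directions illuminate which extreme points and recognizing the illumination patterns as precisely the chains of $P$. Once this dictionary is in place, the conclusion is a routine application of Sperner's and Dilworth's theorems; the remaining care lies in the bookkeeping of \eqref{eq:varExtr} — matching the two families $v^I_\pm$ with subsets containing or avoiding the index $n$ — and in degenerate cases such as $n=2$, where $B_{\mathrm{v}}$ is a line segment and indeed $i(B_{\mathrm{v}})=2=\binom{2}{1}$.
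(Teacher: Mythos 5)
Your proposal is correct, and it takes a genuinely different and in fact more streamlined route than the paper. The paper splits $\mathrm{ext}(B_{\mathrm{v}})$ into two copies $E_+$ and $E_-$ of the Boolean lattice on $n-1$ elements (minus the origin), proves separate antichain and chain lemmas for each, and then must reconcile the two halves: for odd $n$ the naive bound $2\binom{n-1}{\lceil (n-1)/2\rceil}$ overshoots $\binom{n}{\lceil n/2\rceil}$, so the paper invokes the De Bruijn--Tengbergen--Kruyswijk symmetric chain decomposition and a special pairing of singleton chains in $E_+$ with their complements in $E_-$ to merge enough chains across the two halves. You instead identify all $2^n-2$ extreme points with the single poset $P$ of proper nonempty subsets of $\{1,\ldots,n\}$ (absorbing the $v^I_\pm$ dichotomy into whether $n\in S$), verify that a direction $v$ illuminates $\mathbf{1}_S$ exactly when $\max_{i\in S}v_i<\min_{i\notin S}v_i$, and observe that the family of subsets illuminated by a fixed $v$ is precisely a chain in $P$, with every chain realizable by one vector. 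This converts $i(B_{\mathrm{v}})$ directly into the minimum chain cover of $P$, so Sperner gives the lower bound via the middle layer and Dilworth gives the matching upper bound, with no parity case split and no need for symmetric chains. Your criterion for when one vector can serve two extreme points (nestedness of the corresponding subsets) subsumes both Lemma \ref{lem:antichain} and Lemma \ref{lem:+-1} of the paper at once. What the paper's route buys in exchange is an explicit, hands-on construction of the illuminating set in the coordinates of $B_H$ and a self-contained treatment that showcases the symmetric chain machinery; what yours buys is brevity and a conceptually sharper statement, namely that the illumination patterns are exactly the chains of the truncated Boolean lattice.
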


\section{Proof of Theorem \ref{thm:main}}
Note that the map $(x_1,\ldots,x_n)\in V_0\mapsto (x_1,\ldots,x_{n-1})\in\mathbb{R}^{n-1}$ is an isometry from $(V_0,\|\cdot\|_{\mathrm{v}})$ onto 
$(\mathbb{R}^{n-1},\|\cdot\|_H)$, where 
\[
\|x\|_H :=\left(\max_i x_i\right) \vee 0 - \left(\min_i x_i\right) \wedge 0.
\]
Here $a\wedge b:= \min(a,b)$ and $a\vee b:=\max(a,b)$.  Note also that if $B_H$ is the unit ball in $(\mathbb{R}^{n-1},\|\cdot\|_H)$, then 
\[
\mathrm{ext}(B_H) = \left(\{0,1\}^{n-1}\cup \{0,-1\}^{n-1}\right)\setminus\{(0,\ldots,0)\}
\]
and \[i(B_H) = i(B_{\mathrm{v}}).\]  For notational simplicity we  work with $B_H$ instead of $B_{\mathrm{v}}$. 

The following two subsets, 
\[
E_+:= \{0,1\}^{n-1}\setminus\{(0,\ldots,0)\}\mbox{\quad and\quad} E_-:=\{0,-1\}^{n-1}\setminus\{(0,\ldots,0)\},
\]
of $\mathrm{ext}(B_H)$ play a key role in the argument.  On $\mathrm{ext}(B_H)$ we have the usual partial ordering $x\leq y $ if $y-x\in\mathbb{R}^{n-1}_{\geq 0}$, which gives rise to two finite partially ordered sets $(E_+,\leq)$ and $(E_-,\leq)$. 

Recall that subset $\mathcal{A}$ of a partially ordered set $(P,\preceq)$ is called an {\em antichain} if $x,y\in \mathcal{A}$ and $x\preceq y$ implies $x=y$. A {\em chain} $\mathcal{C}$ in $(P,\preceq)$ is a totally ordered subset, if for each $x,y\in \mathcal{C}$ we have that either $x\preceq y$ or $y\preceq x$. The {\em length} of  a chain $\mathcal{C}$ is the number of distinct elements in $\mathcal{C}$.

\begin{lemma}\label{lem:antichain}
Let $\mathcal{A}$ be an antichain in  $(E_+,\leq)$ or in  $(E_-,\leq)$. If $x\neq y$ in $\mathcal{A}$ are illuminated by $v$ and $w$, respectively, then $v\neq w$. 
\end{lemma}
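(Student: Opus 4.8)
The plan is to reduce the lemma to an explicit description of which directions illuminate a given extreme point of $B_H$, and then to derive a contradiction from the assumption that a single direction illuminates two incomparable extreme points.

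\textbf{Step 1: which directions illuminate an extreme point of $B_H$.}
Each $x\in E_+$ is determined by its support $I=I(x):=\{i:x_i=1\}$, a nonempty subset of $\{1,\ldots,n-1\}$, and for $x,y\in E_+$ one has $x\leq y$ iff $I(x)\subseteq I(y)$; likewise each $x\in E_-$ is determined by $I(x):=\{i:x_i=-1\}$. Fix $x\in E_+$ with support $I$ and fix $v\in\mathbb{R}^{n-1}$. For $\lambda>0$ small the coordinates of $x+\lambda v$ indexed by $I$ remain close to $1$ and those indexed by $I^c$ remain close to $0$, so $\max_i(x+\lambda v)_i=1+\lambda\max_{i\in I}v_i$ and $\min_i(x+\lambda v)_i=\lambda\min_{i\notin I}v_i$, the latter term being irrelevant (interpret $\min_\emptyset:=+\infty$) when $I=\{1,\ldots,n-1\}$. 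Since $\lambda>0$ this gives
\[
\|x+\lambda v\|_H=1+\lambda\left(\max_{i\in I}v_i-\left(\min_{i\notin I}v_i\right)\wedge 0\right)
\]
for all sufficiently small $\lambda>0$, and as $\|x\|_H=1$ we conclude that $v$ illuminates $x$ if and only if $\max_{i\in I}v_i<\left(\min_{i\notin I}v_i\right)\wedge 0$. An entirely analogous computation shows that for $x\in E_-$ with support $I$, the vector $v$ illuminates $x$ if and only if $\min_{i\in I}v_i>\left(\max_{i\notin I}v_i\right)\vee 0$; alternatively this follows from the $E_+$ case, since $y\mapsto -y$ is a linear isometry of $(\mathbb{R}^{n-1},\|\cdot\|_H)$ (so it maps $B_H$ onto itself) taking $E_+$ onto $E_-$ and reversing $\leq$, and $v$ illuminates $y$ iff $-v$ illuminates $-y$.

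\textbf{Step 2: incomparable extreme points force distinct directions.}
Suppose $\mathcal{A}$ is an antichain in $(E_+,\leq)$ and, for a contradiction, that some vector $v$ illuminates two distinct $x,y\in\mathcal{A}$, with supports $I$ and $J$. Since $\mathcal{A}$ is an antichain, $I\not\subseteq J$ and $J\not\subseteq I$, so pick $a\in I\setminus J$ and $b\in J\setminus I$; note that $b\in I^c$ and $a\in J^c$, so neither complement is empty and the generic case of Step 1 applies. Applying the criterion of Step 1 to $x$ and using $a\in I$, $b\notin I$,
\[
v_a\leq\max_{i\in I}v_i<\left(\min_{i\notin I}v_i\right)\wedge 0\leq v_b,
\]
so $v_a<v_b$; applying the same criterion to $y$ with the roles of $a$ and $b$ interchanged gives $v_b<v_a$, a contradiction. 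Hence no vector illuminates two distinct elements of $\mathcal{A}$, which is exactly the assertion of the lemma for antichains in $(E_+,\leq)$. The case of an antichain in $(E_-,\leq)$ is identical using the second criterion of Step 1, or follows from the $E_+$ case via the $y\mapsto -y$ symmetry. Finally, since the coordinate projection $(V_0,\|\cdot\|_{\mathrm{v}})\to(\mathbb{R}^{n-1},\|\cdot\|_H)$ is an isometry, the same conclusion transfers to $B_{\mathrm{v}}$.

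\textbf{Expected obstacle.}
The only point requiring care is the justification in Step 1 that $\max_i(x+\lambda v)_i$ and $\min_i(x+\lambda v)_i$ have the claimed form for \emph{all} sufficiently small $\lambda>0$ — this holds because for small $\lambda$ the coordinates indexed by $I$ are uniformly bounded away from those indexed by $I^c$ — together with the correct handling of the degenerate case $I=\{1,\ldots,n-1\}$, in which the term $\left(\min_{i\notin I}v_i\right)\wedge 0$ (resp.\ $\left(\max_{i\notin I}v_i\right)\vee 0$) is precisely what keeps the illumination criterion from becoming vacuous. Once this criterion is established, Step 2 is immediate.
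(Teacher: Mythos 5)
Your proof is correct: the illumination criterion in Step 1 is right (including the degenerate case $I=\{1,\ldots,n-1\}$), and Step 2 extracts the same contradiction $v_a<v_b<v_a$ from the coordinates $a\in I\setminus J$, $b\in J\setminus I$ that the paper's proof obtains via its case split on $z_i\leq z_j$ versus $z_j\leq z_i$. This is essentially the paper's argument, merely repackaged by first isolating a general characterization of which directions illuminate a given extreme point.
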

\begin{proof}
Suppose that $\mathcal{A}$ is antichain in  $(E_+,\leq)$ and $x\neq y$ are in $\mathcal{A}$. Then there exist $i\neq j$ such that $0=x_i< y_i =1$ and $0=y_j<x_j =1$. 
Now suppose by way of contradiction that  $z$ illuminates $x$ and $y$. So, $\|x+\lambda z\|_H<1$ and $\|y+\lambda z\|_H<1$ for all $\lambda>0$ sufficiently small. Suppose first that $z_i\leq z_j$. Then for $\lambda>0$ small, 
\[
1+\lambda z_j =x_j +\lambda z_j \leq \|x+\lambda z\|_H<1, 
\]
and hence $z_j<0$. So, $z_i\leq z_j<0$. But then 
\[
1+ \lambda(z_j-z_i) = x_j+\lambda z_j - \lambda z_i \leq \|x+\lambda z\|_H<1,
\]
which is impossible.  On the other hand, if $z_j\leq z_i$, then $1+\lambda z_i \leq \|y+\lambda z\|_H<1$, so that $z_j\leq z_i<0$. But then 
\[
1+ \lambda(z_i-z_j) = y_i+\lambda z_i- \lambda z_j \leq \|y+\lambda z\|_H<1,
\]
which again is impossible.  Thus, $z$ cannot illuminate both $x$ and $y$.  

The argument for the case where $\mathcal{A}$ is antichain in  $(E_-,\leq)$ is similar.
\end{proof}

\begin{lemma} \label{lem:+-1} If $x,y\in \mathrm{ext}(B_H)$ are such that $x_i=1$ and $y_i=-1$ for some $i$, then one needs two distinct vectors to illuminate $x$ and $y$.
\end{lemma}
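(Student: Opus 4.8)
The plan is to prove the stronger statement that \emph{no single} vector $z\in\mathbb{R}^{n-1}$ can illuminate both $x$ and $y$; since illuminating a set requires supplying an illuminating vector for each of its points, this gives at once that at least two distinct vectors are needed. The key observation is that the single coordinate $i$ already obstructs a common illuminating direction: illuminating $x$ forces $z_i<0$, whereas illuminating $y$ forces $z_i>0$.

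First I would record the relevant normalisation facts. Since $x_i=1$ and the entries of points of $\mathrm{ext}(B_H)$ lie in $\{0,\pm 1\}$, with $E_+$ and $E_-$ not mixing signs, we must have $x\in E_+$, so $x\in\{0,1\}^{n-1}$; hence $\min_k x_k\ge 0$ and $\max_k x_k=1$, giving $\|x\|_H=1$, i.e.\ $x\in\partial B_H$. Symmetrically $y\in E_-$, so $\max_k y_k\le 0$ and $\min_k y_k=-1$, giving $\|y\|_H=1$.

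Now suppose $z$ illuminates $x$, i.e.\ $\|x+\lambda z\|_H<1$ for all sufficiently small $\lambda>0$. For such $\lambda$ the $i$-th coordinate of $x+\lambda z$ equals $1+\lambda z_i>0$, so $(\max_k (x+\lambda z)_k)\vee 0\ge 1+\lambda z_i$, while $-(\min_k (x+\lambda z)_k)\wedge 0\ge 0$ always; thus $\|x+\lambda z\|_H\ge 1+\lambda z_i$, which forces $z_i<0$. Running the mirror-image estimate for $y$: for small $\lambda>0$ the $i$-th coordinate of $y+\lambda z$ is $-1+\lambda z_i<0$, so $-(\min_k(y+\lambda z)_k)\wedge 0\ge 1-\lambda z_i$ while $(\max_k(y+\lambda z)_k)\vee 0\ge 0$; hence $\|y+\lambda z\|_H\ge 1-\lambda z_i$, which forces $z_i>0$. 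These two conclusions are incompatible, so no $z$ illuminates both $x$ and $y$, and consequently at least two distinct vectors are required.

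The only point needing any care, and it is minor, is justifying that for $\lambda$ small the coordinate $i$ governs the relevant one-sided max (resp.\ min): that is precisely what lets us pass to the bounds $\|x+\lambda z\|_H\ge 1+\lambda z_i$ and $\|y+\lambda z\|_H\ge 1-\lambda z_i$ with no information about the other coordinates of $z$. I do not anticipate a genuine obstacle here; the argument is a one-coordinate specialisation of the idea already used in Lemma \ref{lem:antichain}.
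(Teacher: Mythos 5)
Your proposal is correct and follows exactly the paper's argument: the inequality $1+\lambda z_i \le \|x+\lambda z\|_H$ forces $z_i<0$, while $1-\lambda z_i\le\|y+\lambda z\|_H$ forces $z_i>0$, a contradiction. The only cosmetic difference is that you spell out why the $i$-th coordinate bounds $\|\cdot\|_H$ from below (in fact $\|v\|_H\ge|v_i|$ holds for every $v$, so even the restriction to small $\lambda$ is not needed there); the paper takes this for granted.
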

\begin{proof}
Suppose $w$ illuminates $x$ and $y$. Then $1+\lambda w_i  = x_i +\lambda w_i \leq \|x+\lambda w\|_H<1$ for all $\lambda>0$ sufficiently small, and hence $w_i<0$. But also 
 $1 - \lambda w_i = -( y_i +\lambda w_i) \leq \|y+\lambda w\|_H< 1$ for all $\lambda>0$ sufficiently small. This implies that $w_i>0$, which is impossible. Thus, one needs at least two vectors to  illuminate $x$ and $y$. 
\end{proof}

\begin{corollary}\label{lowerbnd}
If $B_H$ is the unit ball of $(\mathbb{R}^{n-1},\|\cdot\|_H)$ and $n\geq 2$, then 
\[
i(B_H) \geq {n\choose\lceil n/2\rceil}.
\] 
\end{corollary}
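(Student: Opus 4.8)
The plan is to exhibit an explicit family of extreme points of $B_H$ of size $\binom{n}{\lceil n/2\rceil}$ any two of which cannot be illuminated by a common vector, and then invoke the fact that a set illuminating $B_H$ must illuminate all of its extreme points. Set $k_+:=\lfloor n/2\rfloor$ and $k_-:=\lceil n/2\rceil$, and define
\[
\mathcal{A}_+:=\{x\in E_+\colon x\text{ has exactly }k_+\text{ nonzero entries}\},\qquad \mathcal{A}_-:=\{y\in E_-\colon y\text{ has exactly }k_-\text{ nonzero entries}\}.
\]
Since $n\ge 2$ we have $1\le k_+\le k_-\le n-1$, so both $\mathcal{A}_+$ and $\mathcal{A}_-$ are nonempty, and they are disjoint because $E_+\cap E_-=\emptyset$. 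As each of them consists of vectors with a fixed number of nonzero entries, $\mathcal{A}_+$ is an antichain in $(E_+,\le)$ and $\mathcal{A}_-$ is an antichain in $(E_-,\le)$.

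The verification then splits into three cases. First, by Lemma~\ref{lem:antichain}, no single vector illuminates two distinct elements of $\mathcal{A}_+$, and likewise for $\mathcal{A}_-$. Second, for $x\in\mathcal{A}_+$ and $y\in\mathcal{A}_-$ the supports of $x$ and $y$ are subsets of $\{1,\ldots,n-1\}$ of sizes $k_+$ and $k_-$ with $k_++k_-=n>n-1$, so they intersect; choosing $i$ in the intersection gives $x_i=1$ and $y_i=-1$, and Lemma~\ref{lem:+-1} shows that $x$ and $y$ also need distinct illuminating vectors. Hence any two distinct points of $\mathcal{A}_+\cup\mathcal{A}_-$ require distinct vectors, so assigning to each such point a vector illuminating it is injective, and any set illuminating $B_H$ (in particular each of its extreme points) has at least $|\mathcal{A}_+|+|\mathcal{A}_-|$ elements.

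It remains to compute $|\mathcal{A}_+|+|\mathcal{A}_-|=\binom{n-1}{k_+}+\binom{n-1}{k_-}$. Since $k_+=(n-1)-(k_--1)$, the symmetry of binomial coefficients gives $\binom{n-1}{k_+}=\binom{n-1}{k_--1}$, and Pascal's identity then yields $\binom{n-1}{k_--1}+\binom{n-1}{k_-}=\binom{n}{k_-}=\binom{n}{\lceil n/2\rceil}$, which is the claimed bound.

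The only step that is not routine is the choice of the two families: a single maximum antichain in $E_+$ (a middle layer of the $(n-1)$-cube) would give only about $\binom{n-1}{\lfloor (n-1)/2\rfloor}$, roughly half of what is needed. The point is to also use a layer of $E_-$, with the sizes $k_+$ and $k_-$ tuned so that (i) both layers are antichains, (ii) $k_++k_-=n>n-1$ forces every cross pair of supports to overlap so Lemma~\ref{lem:+-1} applies, and (iii) Pascal's identity makes the two middle layers combine to exactly $\binom{n}{\lceil n/2\rceil}$.
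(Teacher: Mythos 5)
Your proof is correct and follows essentially the same route as the paper: you pick the same two antichains (the layer of $E_+$ with $\lfloor n/2\rfloor$ ones and the layer of $E_-$ with $\lceil n/2\rceil$ minus-ones, which for both parities coincide with the paper's $\mathcal{A}_+(k)$ and $\mathcal{A}_-(m)$), apply Lemmas~\ref{lem:antichain} and~\ref{lem:+-1}, and count. Your version is slightly cleaner in that it avoids the parity case split and explicitly justifies the support-overlap condition $k_++k_-=n>n-1$ that the paper only asserts.
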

\begin{proof}
For $1\leq k,m\leq n-1$ define the antichians $\mathcal{A}_+(k):= \{ x\in E_+\colon \sum_i x_i = k\}$ and $\mathcal{A}_-(m):= \{ x\in E_-\colon \sum_i x_i = -m\}$.
If $n>1$ is odd, then we can take $k := (n-1)/2$ and $m:=(n+1)/2$ and conclude from  Lemmas \ref{lem:antichain} and \ref{lem:+-1}  that we need at least 
\[
{n-1\choose \frac{n-1}{2}} + {n-1\choose \frac{n+1}{2}} = {n\choose \lceil \frac{n}{2}\rceil}
\] 
distinct vectors to illuminate the extreme points in $\mathcal{A}_+(k)\cup \mathcal{A}_-(m)$, as for each $x\in\mathcal{A}_+(k)$ and $y\in\mathcal{A}_-(m)$ there exists an $i$ such that $x_i =1$ and $y_i=-1$. 

Likewise if $n>1$ is even, we can take $k = m= \lceil \frac{n-1}{2}\rceil$, and deduce from Lemmas \ref{lem:antichain} and \ref{lem:+-1}  that we need at least 
\[
{n-1\choose \lceil \frac{n-1}{2}\rceil} + {n-1\choose \lceil\frac{n-1}{2}\rceil} = {n-1\choose \lfloor \frac{n-1}{2}\rfloor} + {n-1\choose \lceil\frac{n-1}{2}\rceil}  ={n\choose  \frac{n}{2}}
\] 
distinct vectors to illuminate the extreme points in $\mathcal{A}_+(k)\cup \mathcal{A}_-(m)$. 

This completes the proof. 
\end{proof}

\begin{lemma}\label{lem:chains} If $\mathcal{C}$ is a chain in  $(E_+,\leq)$ or in  $(E_-,\leq)$, then there exists $w$ that illuminates each element of $\mathcal{C}$.
\end{lemma}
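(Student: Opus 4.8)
The plan is to first determine, by a direct computation with the piecewise-linear norm $\|\cdot\|_H$, exactly when a single vector illuminates a given point of $E_+$, and then to build one vector that works for a whole chain at once.

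Fix $x\in E_+$ and write $I\subseteq\{1,\ldots,n-1\}$ for the set of coordinates where $x$ equals $1$, so $I\neq\emptyset$ and $\|x\|_H=1$. For $\lambda>0$ small, the coordinates of $x+\lambda w$ indexed by $I$ are close to $1$ and those indexed by $I^c$ are close to $0$; hence $\max_i(x+\lambda w)_i=1+\lambda\max_{i\in I}w_i>0$, while $\min_i(x+\lambda w)_i=\lambda\min_{i\in I^c}w_i$ if $I^c\neq\emptyset$, and $\min_i(x+\lambda w)_i=1+\lambda\min_{i\in I}w_i>0$ if $I^c=\emptyset$. Evaluating $\|\cdot\|_H$ then gives, for all sufficiently small $\lambda>0$,
\[
\|x+\lambda w\|_H=1+\lambda\Bigl(\max_{i\in I}w_i-\bigl(\min_{i\in I^c}w_i\wedge 0\bigr)\Bigr)\quad\text{when }I^c\neq\emptyset,
\]
and $\|x+\lambda w\|_H=1+\lambda\max_i w_i$ when $I^c=\emptyset$. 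Consequently $w$ illuminates $x$ if and only if $w_i<0$ for every $i\in I$ and $\max_{i\in I}w_i<w_j$ for every $j\notin I$ (the second condition being vacuous when $I^c=\emptyset$).

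Now let $\mathcal{C}=\{x^1<\cdots<x^\ell\}$ be a chain in $(E_+,\leq)$ and let $I_k$ be the set of coordinates where $x^k$ equals $1$. Then $I_1,\ldots,I_\ell$ are nested and distinct, so, setting $I_0:=\emptyset$, the \emph{levels} $L_k:=I_k\setminus I_{k-1}$ are nonempty for $k=1,\ldots,\ell$; let also $L_{\ell+1}:=\{1,\ldots,n-1\}\setminus I_\ell$, which may be empty. Define $w$ by $w_i:=c_k$ for $i\in L_k$, where $c_k:=k-\ell-1$ for $k=1,\ldots,\ell$ and $c_{\ell+1}:=0$, so that $c_1<c_2<\cdots<c_\ell<0\leq c_{\ell+1}$. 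For each $k\in\{1,\ldots,\ell\}$ we have $I_k=L_1\cup\cdots\cup L_k$, hence $\max_{i\in I_k}w_i=c_k<0$; and $I_k^c=L_{k+1}\cup\cdots\cup L_{\ell+1}$, so every $j\notin I_k$ satisfies $w_j\geq c_{k+1}>c_k$. By the criterion of the previous paragraph, $w$ illuminates $x^k$ for every $k$, which proves the lemma for chains in $E_+$.

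The case of a chain in $(E_-,\leq)$ reduces to this one. Since $B_H$ is centrally symmetric, $x\mapsto -x$ is an isometry of $(\mathbb{R}^{n-1},\|\cdot\|_H)$ which carries $E_-$ onto $E_+$, reverses the partial order, and has the property that $v$ illuminates $z$ precisely when $-v$ illuminates $-z$. Thus, if $\mathcal{C}'\subseteq E_-$ is a chain, then $\{-y\colon y\in\mathcal{C}'\}$ is a chain in $E_+$, illuminated by some $w$ as above, and then $-w$ illuminates every element of $\mathcal{C}'$. The only delicate point in the whole argument is the first computation — correctly identifying which coordinates realise the maximum and minimum of $x+\lambda w$ as $\lambda\downarrow 0$; once that local picture is clear, the choice of $w$ is essentially forced and the rest is routine, so I do not anticipate a genuine obstacle.
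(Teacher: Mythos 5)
Your proof is correct and follows essentially the same idea as the paper: choose $w$ with negative coordinates on the supports that strictly increase as one moves up the chain, so that the leading-order expansion of $\|x+\lambda w\|_H$ drops below $1$. The only cosmetic difference is that the paper reduces to a canonical maximal chain via coordinate permutations before writing down $w$, whereas you work directly with the given chain through an explicit illumination criterion; both computations are the same.
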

\begin{proof}
Let $\mathcal{C}$ be a chain in  $(E_+,\leq)$ or in  $(E_-,\leq)$. 
We call a chain $c_1\leq c_2\leq \ldots\leq c_m$ in $(E_+,\leq)$ or in  $(E_-,\leq)$ maximal if it has length $n-1$. The chain $\mathcal{C}$ is contained in a maximal chain. As each coordinate permutation is an isometry of $(\mathbb{R}^{n-1},\|\cdot\|_H)$ and the map $x\mapsto -x$ is an isometry of $(\mathbb{R}^{n-1},\|\cdot\|_H)$, we may assume without loss of generality that 
$\mathcal{C}$ is contained in the maximal chain, 
\[
\mathcal{C}^*\colon (1,0,0,\ldots,0)\leq (1,1,0,\ldots, 0)\leq \ldots\leq (1,1,\ldots, 1,0)\leq (1,1,1,\ldots,1).
\]
Let $w\in\mathbb{R}^{n-1}$ be such that $w_1<w_2<\ldots<w_{n-1}<0$. 
Now if $x$ is the $k$-th element in the maximal chain and $k<n-1$, then  for all $\lambda>0$ sufficiently small  
\[
\|x+\lambda w\|_H = \left(\max_i x_i+\lambda w_i \right) \vee 0 - \left(\min_i x_i+\lambda w_i \right) \wedge 0 = 1+\lambda w_k - \lambda w_{k+1} <1.
\] 
On the other hand, if $x=(1,1,\ldots,1)$, then  clearly $\|x+\lambda w\|_H =1+\lambda w_{n-1}<1$ for all $\lambda>0$ small. Thus $w$ illuminates each element of $\mathcal{C}^*$ and we are done. 
\end{proof}

To proceed we need to recall a few classical results in the combinatorics of finite partially ordered sets, see \cite[Sections 9.1 and 9.2]{Ju}. Firstly, we recall Dilworth's Theorem, which says that if the maximum size of an antichain in a finite partially ordered set  $(P,\preceq)$ is $r$, then $P$ can be partitioned into $r$  disjoint chains. In the case where the partially ordered set is $(\{0,1\}^d,\leq)$, one can combine this result with Sperner's Theorem, which says that the maximum size of antichain in  $(\{0,1\}^d,\leq)$ is ${d\choose \lceil d/2\rceil}$. Thus, $(\{0,1\}^d,\leq)$ can be partitioned into ${d\choose \lceil d/2\rceil}$ disjoint chains. 

To obtain our result we need some more detailed information about the partitions. In particular, we need a result by De Bruijn,Tengbergen, Kruyswijk \cite{dBTK} concerning symmetric chains, see  also \cite[Theorem 9.3]{Ju}. 
A chain $x^1\leq \ldots\leq x^k$  in  $(\{0,1\}^{d},\leq)$ is said to be {\em symmetric} if 
\begin{enumerate}[(a)]
\item $(\sum_{j=1}^d x^m_j ) + 1=  \sum_{j=1}^d x^{m+1}_j$ for all $1\leq m<k$, i.e., $x^{m+1}$ is an immediate successor of  $x^m$,
\item $\sum_{j=1}^d x^k_j  = d- \sum_{j=1}^d x^{1}_j$.
\end{enumerate}
\begin{theorem}[De Bruijn,Tengbergen, Kruyswijk] \label{DBTK} The poset $(\{0,1\}^d,\leq)$ can be partitioned into ${d\choose \lceil d/2\rceil}$ disjoint symmetric chains. 
\end{theorem}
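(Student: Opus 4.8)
The plan is an induction on $d$, via the classical doubling construction for symmetric chain decompositions. When $d=1$ the poset $(\{0,1\}^1,\leq)$ is itself a single symmetric chain $(0)\leq (1)$ and ${1\choose 1}=1$, so there is nothing to prove (one could equally start from $d=0$).

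For the inductive step I identify $\{0,1\}^d$ with $\{0,1\}^{d-1}\times\{0,1\}$, writing a typical element as $(y,\varepsilon)$ with $y\in\{0,1\}^{d-1}$ and $\varepsilon\in\{0,1\}$. Assuming a partition of $(\{0,1\}^{d-1},\leq)$ into disjoint symmetric chains has been obtained, I associate to each such chain $\mathcal{C}\colon x^1\leq x^2\leq\cdots\leq x^k$ (at most) two chains in $\{0,1\}^d$:
\[
\mathcal{C}'\colon\ (x^1,0)\leq(x^2,0)\leq\cdots\leq(x^k,0)\leq(x^k,1),
\]
and, when $k\geq 2$,
\[
\mathcal{C}''\colon\ (x^1,1)\leq(x^2,1)\leq\cdots\leq(x^{k-1},1)
\]
(for $k=1$ the chain $\mathcal{C}''$ is empty and is discarded). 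The family of all the $\mathcal{C}'$ and $\mathcal{C}''$, as $\mathcal{C}$ ranges over the decomposition of $\{0,1\}^{d-1}$, will be the required symmetric chain decomposition of $\{0,1\}^d$.

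Three things then need checking. \emph{Partition:} each $y\in\{0,1\}^{d-1}$ lies in a unique chain $\mathcal{C}$ of the $(d-1)$-decomposition, and one reads off directly that $(y,0)$ occurs exactly once (inside $\mathcal{C}'$), while $(y,1)$ occurs exactly once, as the top of $\mathcal{C}'$ if $y$ is the maximum of $\mathcal{C}$ and inside $\mathcal{C}''$ otherwise. \emph{Symmetry:} condition (a) is immediate, as consecutive members of $\mathcal{C}'$ and of $\mathcal{C}''$ differ in a single coordinate; for condition (b), put $r:=\sum_j x^1_j$, and use that $\mathcal{C}$ is symmetric in $\{0,1\}^{d-1}$, so $\sum_j x^k_j=(d-1)-r$. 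Then the coordinate sums of the bottom and top of $\mathcal{C}'$ are $r$ and $(d-1-r)+1=d-r$, which add to $d$; those of $\mathcal{C}''$ are $r+1$ and $(\sum_j x^{k-1}_j)+1=(d-2-r)+1=d-1-r$, which again add to $d$. \emph{Count:} a symmetric chain with bottom coordinate sum $r$ has top coordinate sum $d-r\geq r$, hence $r\leq\lfloor d/2\rfloor$, and by (a) it contains exactly one element of each rank between $r$ and $d-r$, in particular exactly one element of rank $\lceil d/2\rceil$; since the new chains partition $\{0,1\}^d$, their number equals the number of elements of rank $\lceil d/2\rceil$, namely ${d\choose\lceil d/2\rceil}$.

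The only delicate points are the bookkeeping of coordinate sums through the induction in the symmetry verification, and the two boundary cases: $k=1$, where $\mathcal{C}''$ is empty, and the maximum of $\mathcal{C}$, which migrates to the top of $\mathcal{C}'$ rather than into $\mathcal{C}''$. Everything else is purely formal. I note that Sperner's Theorem — the maximum size of an antichain in $(\{0,1\}^d,\leq)$ is ${d\choose\lceil d/2\rceil}$ — drops out immediately, since an antichain meets each of the ${d\choose\lceil d/2\rceil}$ chains in at most one point while the middle layer realises equality.
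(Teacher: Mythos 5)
Your proof is correct: the doubling construction, the verification of conditions (a) and (b) through the induction, and the counting of chains via the middle layer are all sound, including the two boundary cases you flag. Note that the paper offers no proof of this statement at all --- it is quoted as a classical result with references to De Bruijn--Tengbergen--Kruyswijk and to \cite[Theorem 9.3]{Ju} --- and your argument is precisely the standard inductive doubling proof given in those sources, so there is nothing to compare beyond saying that you have supplied the omitted classical argument in full and correctly.
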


Let us now prove the main result of the paper. 
\begin{proof}[Proof of Theorem \ref{thm:main}]
First recall that  by Corollary \ref{lowerbnd} it suffices to show that  
$i(B_H)\leq {n\choose \lceil\frac{n}{2}\rceil}$, as $i(B_{\mathrm{v}})=i(B_H)$.   In other words, we only need to show that $\mathrm{ext}(B_H)$ can be illuminated by ${n\choose \lceil\frac{n}{2}\rceil}$ 
vectors.

There are two cases to consider: $n\geq 2$ even, and $n\geq 2$ odd. 

Let us first consider the case where $n\geq 2$ is even. 
By Dilworth's Theorem and Sperner's Theorem we know that the partially ordered set  $(\{0,1\}^{n-1},\leq)$ can be partitioned into ${n-1\choose \lceil \frac{n-1}{2}\rceil}$ disjoint chains. 
This implies that each of the partially ordered sets $(E_+,\leq)$ and $(E_-,\leq)$ can be partitioned into ${n-1\choose \lceil \frac{n-1}{2}\rceil}$ disjoint chains.  It now follows from Lemma \ref{lem:chains} that we need at most 
\[
{n-1\choose \lceil \frac{n-1}{2}\rceil}+{n-1\choose \lceil \frac{n-1}{2}\rceil}= {n-1\choose \lfloor \frac{n-1}{2}\rfloor}+{n-1\choose \lceil \frac{n-1}{2}\rceil} = {n\choose \frac{n}{2}}
\] 
distinct vectors to illuminate $\mathrm{ext}(B_H)$. This implies that $i(B_{\mathrm{v}})= i(B_H)\leq{n\choose \frac{n}{2}}$. 

Now suppose that $n\geq 2$ is odd. By Theorem \ref{DBTK} we know that  $(\{0,1\}^{n-1},\leq)$ can be partitioned into ${n-1\choose  \frac{n-1}{2}}$ disjoint symmetric chains. 

Let us consider such a symmetric chain decomposition, and let 
\[\mathcal{A}_k:=\{x\in \{0,1\}^{n-1}\colon \mbox{$\sum_i x_i =k$}\},\] which is an antichain of size ${n-1\choose k}$. Each element of $\mathcal{A}_{(n+1)/2}$ is contained in a distinct symmetric chain, and each of these chain contains an $x\in \{0,1\}^{n-1}$ with $\sum_i x_i =(n-1)/2$. Thus, the symmetric chain decomposition of $(\{0,1\}^{n-1},\leq)$ consists of 
\[
{n-1 \choose \frac{n+1}{2}}
\]
chains containing a vector $x$ with $\sum_i x_i =(n+1)/2$, and
\[
{n-1\choose \frac{n-1}{2}} - {n-1 \choose \frac{n+1}{2}}
\]
chains consisting of a single vector $x$ with $\sum_i x_i =(n-1)/2$. 

By deleting $(0,0,\ldots,0)$ from $\{0,1\}^{n-1}$ we obtain a partition of $(E_+,\leq )$ into disjoint chains. Let $\mathcal{S}$ be the set of vectors in $E_+$ which form a singleton chain and $\sum_i x_i = (n-1)/2$. So, 
\[
|\mathcal{S}| = {n-1\choose \frac{n-1}{2}} - {n-1 \choose \frac{n+1}{2}}.
\]

Now pair each $x\in E_+$ with $x'\in E_-$, where $x'_i=0$ if $x_i=1$, and $x'_i=-1$ if $x_i=0$. In this way we obtain a partition of $(E_-,\leq)$ into disjoint chains with $|\mathcal{S}|$ chains consisting of a single vector. In other words, for each $x\in \mathcal{S}$ we have that $x'\in E_-$ forms a singleton chain in the chain decomposition of $(E_-,\leq)$.

We know from Lemma \ref{lem:chains} that we can illuminate the ${n-1\choose \frac{n+1}{2}}$ chains in $(E_+,\leq)$  containing a vector $x$ with $\sum_i x_i = (n+1)/2$ using ${n-1\choose \frac{n+1}{2}}$ vectors. Likewise, we can  illuminate the corresponding  ${n-1\choose \frac{n+1}{2}}$ chains in $(E_-,\leq)$  with  
${n-1\choose \frac{n+1}{2}}$ vectors. So, it remains to illuminate the singleton chains in $(E_+,\leq)$ and $(E_-,\leq)$. 

Note that if we can illuminate each pair $\{x,x'\}$, with $x\in\mathcal{S}$ and $x'$ the corresponding vector in $E_-$, by a single vector, then we need at most 
\[
2{n-1\choose \frac{n+1}{2}}+ {n-1\choose \frac{n-1}{2}} - {n-1\choose \frac{n+1}{2}} = {n-1\choose \frac{n-1}{2}} +{n-1\choose \frac{n+1}{2}} = {n\choose \lceil \frac{n}{2}\rceil}
\]  
vectors to illuminate $\mathrm{ext}(B_H)$, and hence $i(B_{\mathrm{v}}) = i(B_H) \leq   {n\choose \lceil \frac{n}{2}\rceil}$ if $n\geq 2$ is odd. 

To see how this can be done we consider such a pair $\{x,x'\}$ with $x\in\mathcal{S}$ and let $I:=\{i\colon x_i =1\}$ and $J:=\{i\colon x_i=0\}$. So, $I=\{i\colon x'_i =0\}$ and $J =\{i\colon x'_i=-1\}$.  Now let $w\in\mathbb{R}^{n-1}$ be such that $w_i<0$ for all $i\in I$ and $w_i>0$ for all $i\in J$. 
Then for all $\lambda>0$ sufficiently small, 
\[
\|x+\lambda w\|_H = \max_{i\in I} (1+\lambda w_i) - 0 <1
\]
and 
\[
\|x'+\lambda w\|_H = 0 - \min_{i\in J} (-1 +\lambda w_i)<1.
\]
This shows that $w$ illuminates $x$ and $x'$, which completes the proof. 
\end{proof}

\end{document}